\newcommand{\Z}{\mathbb{Z}}
\newcommand{\F}{\mathbb{F}}
\newcommand{\G}{\mathbb{G}}
\newcommand{\Hom}{\underline{\mathrm{Hom}}}
\newcommand{\Isom}{\underline{\mathrm{Isom}}}
\newcommand{\overbar}[1]{\mkern 1.3mu\overline{\mkern-1.3mu#1\mkern-1.3mu}\mkern 1.3mu}
\DeclareMathOperator{\spec}{Spec}
\newtheorem{theorem}{Theorem}[section]
\newtheorem{lemma}[theorem]{Lemma}
\newtheorem{proposition}[theorem]{Proposition}
\theoremstyle{definition}
\newtheorem{definition}[theorem]{Definition}
\newtheorem{example}[theorem]{Example}
\theoremstyle{remark}
\newtheorem{remark}[theorem]{Remark}
\numberwithin{equation}{section}
\begin{document}

\title{Full Level Structure on Some Group Schemes}

\author{Chuangtian Guan}
\address{Department of Mathematics, Michigan State University, 619 Red Cedar Road, East Lansing, MI 48824, USA}
\email{guanchua@msu.edu}

\subjclass[2010]{14L15, 11G09}

\date{\today}

\begin{abstract}
We give a definition of full level structure on group schemes of the form $G\times G$, where $G$ is a finite flat commutative group scheme of rank $p$ over a $\mathbb{Z}_p$-scheme $S$ or, more generally, a truncated $p$-divisible group of height $1$. We show that there is no natural notion of full level structure over the stack of all finite flat commutative group schemes.
\end{abstract}

\maketitle

\section{Introduction}
Throughout this paper, all group schemes are assumed to be finite flat and commutative over the base. Let $H$ be such a group scheme of rank $p^l$ over a $\Z_p$-scheme $S$ (``rank $p^l$" means that $\mathcal{O}_H$ is a locally free $\mathcal{O}_S$-algebra of rank $p^l$).  Suppose that $H$ is annihilated by $p^r$ and $H[\frac{1}{p}]\colonequals H\times_SS[\frac{1}{p}]$ is \'etale-locally isomorphic to the constant group scheme $(\underline{\Z/p^r\Z})^g$ for some $r$ and $g$. This happens, for example, when $H$ is the $p^r$-torsion of some abelian variety of dimension $g/2$.

Let $\mathrm{Hom}_S((\mathbb{Z}/p^r\mathbb{Z})^g,H)$ be the functor from the category of $S$-schemes $\mathbf{Sch}_S$ to the category of abelian groups $\mathbf{Ab}$, defined by $$\mathrm{Hom}_S((\mathbb{Z}/p^r\mathbb{Z})^g,H)(T)\colonequals\mathrm{Hom}_{gp}((\mathbb{Z}/p^r\mathbb{Z})^g,H(T)).$$ We will use $\Hom_S((\mathbb{Z}/p^r\mathbb{Z})^g,H)$ to denote the representing scheme. Since $H$ is annihilated by $p^r$, the representing scheme is just $H^g$. The general linear group $\mathrm{GL}_g(\Z/{p^r}\Z)$ has a natural right action on $\Hom_S((\mathbb{Z}/p^r\mathbb{Z})^g,H)$ by acting on $(\mathbb{Z}/p^r\mathbb{Z})^g$ by precomposition.

The problem we consider is to give a notion of full level structure on $H$. We expect it to be a closed subscheme of $\Hom_S((\mathbb{Z}/p^r\mathbb{Z})^g,H)$, which we denote by $\Hom^\ast_S((\mathbb{Z}/p^r\mathbb{Z})^g, H)$, satisfying:

\begin{enumerate}
\item $\Hom^\ast_S((\mathbb{Z}/p^r\mathbb{Z})^g, H)$ is {\sl flat} over $S$ and of rank $|\mathrm{GL}_g(\Z/{p^r}\Z)|$.

\item $\Hom^\ast_S((\mathbb{Z}/p^r\mathbb{Z})^g, H)$ is $\mathrm{GL}_g(\Z/{p^r}\Z)$-invariant under the right $\mathrm{GL}_g(\Z/{p^r}\Z)$-action on $\Hom_S((\mathbb{Z}/p^r\mathbb{Z})^g, H)$. Away from characteristic $p$ we have an identification $\Hom^\ast_{S[\frac{1}{p}]}((\mathbb{Z}/p^r\mathbb{Z})^g, H[\frac{1}{p}])=\Isom_{S[\frac{1}{p}]}((\mathbb{Z}/p^r\mathbb{Z})^g, H[\frac{1}{p}])$ as closed subschemes of $\Hom_{S[\frac{1}{p}]}((\mathbb{Z}/p^r\mathbb{Z})^g, H[\frac{1}{p}])$.

\item When identifying $\Hom_S((\mathbb{Z}/p^r\mathbb{Z})^g, H)\times_S T$ with $\Hom_T((\mathbb{Z}/p^r\mathbb{Z})^g,H_T)$ in the natural way, we have $\Hom^\ast_{S}((\mathbb{Z}/p^r\mathbb{Z})^g, H)\times_S T=\Hom^\ast_{T}((\mathbb{Z}/p^r\mathbb{Z})^g,H_T)$ as closed subschemes, for any $S$-scheme $T$.
\end{enumerate}

We also expect our definition to coincide with the intuitive definition for some familiar group schemes. For example, for $H=\mu_{p^r}$, we expect $\Hom^\ast_{\mathbb{Z}}(\mathbb{Z}/p^r\mathbb{Z}, H)$ to be the closed subscheme of $\mu_{p^r}$ defined by the cyclotomic polynomial $$\Phi_{p^r}(x)\colonequals\dfrac{x^{p^r}-1}{x^{p^{r-1}}-1}=x^{(p-1)p^{r-1}}+x^{(p-2)p^{r-1}}+\cdots+1.$$ When $H$ is the constant group scheme $(\underline{\Z/p^r\Z})^g$, the resulting full level structure $\Hom^\ast_{\mathbb{Z}}((\mathbb{Z}/p^r\mathbb{Z})^g, H)$ should be $\underline{\mathrm{GL}_g(\Z/p^r\Z)}\subset \underline{\mathrm{Mat}_g(\Z/p^r\Z)}$.

The motivation for giving a well-behaved notion of full level structure comes from the study of integral models of Shimura varieties. For example, for modular curves, finding an integral model of the modular curve $X(p^r)$ essentially amounts to finding a flat model of full level structure on the $p^r$-torsion of elliptic curves. This is done by Katz and Mazur in their book \cite{KM}: following an idea of Drinfeld in \cite{Dr}, Katz and Mazur consider the case when $H$ can be embedded into a curve. In this case a set of sections $\{P_1, \ldots,  P_r\}$ of $H$ is defined to be a ``full set of sections", if the points generate the group $H$ as Cartier divisors. Using this notion, the full level structure on $H$ is defined to be the maps in $\mathrm{Hom}_S((\mathbb{Z}/p^r\mathbb{Z})^g,H)$ whose image forms a full set of sections. As a scheme, $\Hom^\ast_S((\mathbb{Z}/p^r\mathbb{Z})^g, H)$ can be also described as the closed subscheme of $\Hom_S((\mathbb{Z}/p^r\mathbb{Z})^g,H)$ cut out by the Cartier divisor equation 
\begin{center}
$\displaystyle H=\sum\limits_{x\in(\Z/p^r\Z)^g}[h(x)]$
\end{center}
where $h$ is the universal homomorphism. Katz and Mazur's construction, for example, gives a definition of full level structure on $\underline{\Z/p\Z}\times \mu_p$, as it is the $p$-torsion of an ordinary elliptic curve. They also suggest a natural generalization of their construction, given by ``$\times$-homomorphisms" \cite[Appendix of Chapter 1]{KM}, that can be defined for general group schemes. Unfortunately, the notion of $\times$-homomorphisms is deficient because the resulting closed subscheme is generally not flat over the base. Such a negative result has been observed by Chai and Norman in \cite[Appendix 2]{CN}. For example, the nonflatness for $\times$-homomorphisms even happens on $\mu_p\times \mu_p$.

As an improvement, Wake gives in \cite{Wa} a good definition in the case of $H=\mu_p \times \mu_p$ over $\spec \Z$. By using a notion of ``primitive elements", he defines the full level structure, called ``scheme of full homomorphisms", to be cut out by the condition that all nontrivial linear combinations of rows and columns of the universal homomorphism are primitive. Alternatively, Wake also gives another level structure, called ``KM+D" level structure, short for Katz-Mazur + Dual. The notion of KM+D level structure is defined by requiring both universal homomorphism and its dual being $\times$-homomorphisms as defined by Katz and Mazur. Wake proves that in the case $\mu_p\times \mu_p$, the KM+D level structure coincides with his original notion of full homomorphisms. Unfortunately, in general the ``KM+D" level structure does not give a flat scheme over the base. For example, it is observed in \cite[Example 4.8]{Wa} that $\Hom^{\mathrm{KM+D}}_{\F_2}((\mathbb{Z}/2\mathbb{Z})^2,(\alpha_2)^2)$ has larger rank than expected.

In this paper, we give a definition of full level structure for $H$ of the form $H=G\times G$, where $G$ is a rank $p$ group scheme over a $\Z_p$-scheme $S$. When $G$ is $\mu_p$, our definition coincides with the one in \cite{Wa}. The idea of our construction is to generalize Wake's ``rows-and-columns" construction to a general group scheme $G$ using Kottwitz-Wake's notion of primitive elements \cite{KW}. In \cite{KW} the authors give a notion of primitive elements which is well-behaved, even for general $p$-divisible groups. Using this notion, our full level structure will be cut out by the condition that rows and columns of the universal homomorphism are linearly independent, as in Wake's construction. The precise description and properties are discussed in Section 3. The main point is that this construction gives a flat model. We show this by using Oort--Tate theory to reduce to Wake's result.

One might also expect the following naturality condition:
\begin{enumerate}
  \setcounter{enumi}{3}
  \item For any group scheme isomorphism $H\xrightarrow{\sim} H^\prime$, the induced isomorphism $\Hom_S((\mathbb{Z}/p^r\mathbb{Z})^g,H)\to \Hom_S((\mathbb{Z}/p^r\mathbb{Z})^g,H^\prime)$ restricts to an isomorphism $\Hom_S^\ast((\mathbb{Z}/p^r\mathbb{Z})^g,H)\to \Hom_S^\ast((\mathbb{Z}/p^r\mathbb{Z})^g,H^\prime)$.
\end{enumerate}
This condition $(4)$ can be interpreted as saying the notion of full level structure is defined over the stack. Unfortunately, it turns out that in general there is no level structure on $H$ satisfying all conditions $(1)-(4)$. After we wrote a first draft of this paper, Wake informed me that he and Kottwitz had proven a similar result in unpublished work. He pointed out that the construction cannot extend to the stack. We discuss this result in Section 6 and include their example there.

\textbf{Acknowledgment: }I would like to thank my advisor George Pappas for his support and encouragement. I thank Preston Wake for very helpful conversations and useful suggestions. I also thank the reviewers for patient reading and helpful suggestions.

\section{Review of the Oort--Tate theorem}

In \cite{OT}, Oort and Tate determine the structure of all finite flat commutative group schemes of rank $p$ over a $\Z_p$-scheme $S$. They prove that such a group scheme is given by a triple $(\mathcal{L},u,v)$ where $\mathcal{L}\in \text{Pic}(S)$, $u\in\Gamma(S,\mathcal{L}^{\otimes{(p-1)}})$ and $v\in\Gamma(S,\mathcal{L}^{\otimes{(1-p)}})$ satisfying $u\otimes v=w_p$, where $w_p$ is a constant in $p\Z_p^\times$. Specifically, when $S=\spec A$ where $A$ is a local ring, the line bundle $\mathcal{L}$ is trivial. Therefore to give an rank $p$ group scheme over such $S$, it suffices to give two elements $u,v\in A$ satisfying $uv=w_p$. For such a pair $(u,v)$, the corresponding Hopf algebra is $\spec A[x]/(x^p-ux)$ with the comultiplication 
\[
m^\ast(x)=1\otimes x+x\otimes 1 +\dfrac{1}{1-p}\sum\limits_{i=1}^{p-1}\dfrac{vx^i\otimes x^{p-i}}{w_iw_{p-i}}.
\]
 Here $w_i$'s are constants in $\Z_p$ with $w_1,\cdots, w_{p-1}\in \Z_p^{\times}$. The constants $w_1,\cdots, w_{p-1}$ satisfy that $w_i\equiv i! \mod p$ and $w_p=pw_{p-1}$. For more details on the $w_i$'s, see \cite[page 10]{OT}.

Haines and Rapoport express this result using stack language in \cite[Theorem 3.3.1]{HR}. For convenience, we give the result here:

\begin{theorem}[\cite{HR}]
The $\Z_p$-stack $OT$ of finite flat commutative group schemes of rank $p$, satisfies the following properties:
\begin{enumerate}[(i)]
\item $OT$ is an Artin stack isomorphic to $$[(\spec \Z_p[s,t]/(st-w_p))/ \mathbb{G}_m].$$
The action of $\G_m$ is given by $\lambda\cdot(s,t)=(\lambda^{p-1}s, \lambda^{1-p}t)$ and $w_p$ is a constant in $p\Z_p^\times$.
\item The universal group scheme $\mathcal{G}$ over $OT$ is $$\mathcal{G}=[(\spec_{OT} \mathcal{O}[x]/(x^p-tx))/ \mathbb{G}_m].$$
The action of $\G_m$ is given by $\lambda\cdot x=\lambda x$.
\end{enumerate}
\end{theorem}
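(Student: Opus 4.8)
The plan is to deduce the theorem from the Oort--Tate classification recalled above, by reinterpreting the classifying data $(\mathcal{L},u,v)$ as a morphism into a quotient stack. First I would package the Oort--Tate theorem as an equivalence of categories fibered in groupoids over $\mathbf{Sch}_{\Z_p}$: to a rank $p$ group scheme $G/S$ one attaches the triple $(\mathcal{L},u,v)$ with $\mathcal{L}\in\mathrm{Pic}(S)$, $u\in\Gamma(S,\mathcal{L}^{\otimes(p-1)})$, $v\in\Gamma(S,\mathcal{L}^{\otimes(1-p)})$ and $u\otimes v=w_p$, and to a homomorphism of group schemes the corresponding isomorphism of triples. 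The essential point, which is exactly the content of Oort and Tate's construction, is that this assignment is an equivalence and is compatible with arbitrary base change $T\to S$. I would record this functoriality explicitly, since it is what allows the comparison to happen at the level of stacks rather than merely isomorphism classes.

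Next I would identify the stack $\mathcal{T}$ of such triples with the quotient $[X/\G_m]$, where $X=\spec\Z_p[s,t]/(st-w_p)$. A morphism $S\to[X/\G_m]$ is by definition a $\G_m$-torsor $P\to S$ together with a $\G_m$-equivariant map $P\to X$. Under the standard dictionary, the torsor $P$ is the same datum as a line bundle $\mathcal{L}$, and a $\G_m$-equivariant map to $X$ is the same as a pair of sections: the weight $(p-1)$ coordinate $s$ pulls back to a section of $\mathcal{L}^{\otimes(p-1)}$ and the weight $(1-p)$ coordinate $t$ to a section of $\mathcal{L}^{\otimes(1-p)}$, the relation $st=w_p$ translating into $u\otimes v=w_p$. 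This produces a mutually inverse pair of functors between $\mathcal{T}$ and $[X/\G_m]$, again compatibly with base change, and composing with the equivalence of the first paragraph gives part (i). Algebraicity of $OT$ is then automatic: since $X$ is affine and $\G_m$ is smooth and affine, the quotient $[X/\G_m]$ is an Artin stack.

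For part (ii) I would produce the universal group scheme by descent. Over the atlas $X$, which corresponds to the triple with $\mathcal{L}$ trivial and $(u,v)=(s,t)$, the Oort--Tate recipe gives the explicit Hopf algebra $\mathcal{O}_X[x]/(x^p-ux)$ of the review, and I would equip it with the $\G_m$-action $\lambda\cdot x=\lambda x$. The essential check is that this action is well defined and respects the Hopf structure: with $x$ in weight one, the defining relation and the displayed comultiplication are $\G_m$-homogeneous precisely because $u$ has weight $p-1$ and $v$ has weight $1-p$, this homogeneity being exactly the consistency condition that fixes the weight of the coefficient appearing in the degree $p$ relation. Faithfully flat descent along $X\to[X/\G_m]$ then glues this into a group scheme over $OT$ whose fibre over each triple is, by construction, the corresponding Oort--Tate group scheme; hence it is the universal $\mathcal{G}$.

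The genuinely deep input is the Oort--Tate classification itself, which I am treating as given. Granting it, the only real obstacle is bookkeeping: pinning down the torsor/line-bundle dictionary with the correct weights and signs, so that the two characters $\lambda^{p-1}$ and $\lambda^{1-p}$ land on $u$ and $v$ in the right order, and checking that the equivalence respects morphisms, and not merely objects, as well as base change. I expect the comultiplication compatibility in (ii) to be the most computation-heavy step, but it reduces to a direct homogeneity verification once the weights are fixed.
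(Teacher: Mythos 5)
There is nothing in the paper to compare your argument against: this theorem is quoted from Haines--Rapoport \cite[Theorem 3.3.1]{HR} as a stack-theoretic restatement of the Oort--Tate classification, and the paper gives no proof of it. Your outline is the standard derivation and is essentially correct: the classification by triples $(\mathcal{L},u,v)$, taken functorially in $S$, upgrades to an equivalence of categories fibered in groupoids; the dictionary between a map $S\to[X/\G_m]$ (a $\G_m$-torsor together with an equivariant map to $X=\spec\Z_p[s,t]/(st-w_p)$) and a line bundle $\mathcal{L}$ with sections of $\mathcal{L}^{\otimes(p-1)}$ and $\mathcal{L}^{\otimes(1-p)}$ multiplying to $w_p$ identifies the stack of triples with the quotient stack; algebraicity is automatic since $X$ is affine and $\G_m$ is smooth and affine; and the universal group scheme is obtained by $\G_m$-equivariant descent of the explicit Hopf algebra along the smooth atlas $X\to[X/\G_m]$.

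One substantive point, which your homogeneity check exposes and which you should make explicit: the statement as displayed in this paper is not internally consistent, and your proof in fact establishes a corrected version of it. With weights $\lambda\cdot(s,t)=(\lambda^{p-1}s,\lambda^{1-p}t)$ and $\lambda\cdot x=\lambda x$, the term $x^p$ has weight $p$ while $tx$ has weight $2-p$, so the ideal $(x^p-tx)$ is not $\G_m$-stable and the displayed quotient $[(\spec_{OT}\mathcal{O}[x]/(x^p-tx))/\G_m]$ is not well defined. Since Oort--Tate place the relation coefficient $u$ in $\Gamma(S,\mathcal{L}^{\otimes(p-1)})$, any correct form of the statement must put the weight-$(p-1)$ variable in the relation; with the displayed weights that relation is $x^p-sx$, which is exactly what your identification $(u,v)=(s,t)$ produces. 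Be aware, though, that Section 4 of this paper uses the opposite convention, $\mathcal{G}=\spec\mathcal{A}[x]/(x^p-tx)$ with $s$ appearing in the comultiplication; under that convention the weights must instead be swapped to $\lambda\cdot(s,t)=(\lambda^{1-p}s,\lambda^{p-1}t)$. Your argument goes through under either convention once you fix one consistently and state the theorem accordingly; the only remaining care needed is the sign/weight normalization in the torsor--line-bundle dictionary, which you already flag.
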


\section{Full level structure}\label{sec3}

The first step in defining the full level structure on $G\times G$ is defining ``primitive elements". This is done by Kottwitz and Wake in \cite{KW}.

\begin{definition}[\cite{KW}]\label{primitive}
Let $H$ be a finite flat group scheme over a base scheme $S$. The scheme of primitive elements $H^{\times}$ is the closed subscheme of $H$ with the defining ideal sheaf given as the annihilator of the augmentation ideal sheaf.
\end{definition}

An important example is the Oort--Tate group scheme $G=\spec A[x]/(x^p-tx)$, where $A$ is a $\Z_p$-algebra. The augmentation ideal is $(x)$. Thus $G^\times$ is defined by the ideal $(x^{p-1}-t)$, coinciding with the scheme of generators defined in \cite{HR}.

Another example is $G\times G$. Its underlying algebra is $A[x,y]/(x^p-tx, y^p-ty)$ with the augmentation ideal $(x,y)$. The annihilator of $(x,y)$ is the intersection of the annihilator of $x$, which is $(x^{p-1}-t)$, and the annihilator of $y$, which is $(y^{p-1}-t)$. Suppose $f\in (x^{p-1}-t)\cap(y^{p-1}-t)$. Since $f\in (x^{p-1}-t)$, we may assume $f=(x^{p-1}-t)g$ with $g\in A[y]$ of degree less than $p$. Similarly $f=(y^{p-1}-t)h$ with $h\in A[x]$ and $\deg h<p$. Note that in this ring $A[x,y]/(x^p-tx, y^p-ty)$, every element can be uniquely written as a polynomial with the degrees of $x$ and $y$ being less than $p$. Because of the unique expression, we have $(x^{p-1}-t)g=(y^{p-1}-t)h$  as polynomials in $A[x,y]$. By comparing the coefficients, we can get $(y^{p-1}-t)|g$. Therefore we have $$(x^{p-1}-t)\cap(y^{p-1}-t) = \left((x^{p-1}-t)(y^{p-1}-t)\right)$$ and thus the scheme of primitive elements in $G^2$ is $$(G^2)^\times=\spec A[x,y]/\left((x^{p-1}-t)(y^{p-1}-t)\right).$$ See also \cite[Section 3.8]{KW}.

Now we consider the operation on the points of $\mathrm{Hom}_{S}((\mathbb{Z}/p\mathbb{Z})^2,G^2)=G^4$ (as functors). We will identify $G^4(T)$ with $\mathrm{Mat}_2(G(T))$, the additive group of $2\times 2$ matrices with entries in $G(T)$. On each entry $\mathrm{Hom}_S(\mathbb{Z}/p\mathbb{Z},G)(T)=G(T)$, there is a natural addition arising from the group structure of $G$. We denote this addition by $\dot{+}$, to distinguish it from the addition on $\mathcal{O}_G$. For simplicity, for any $f\in G(T)$, let $[m]f$ be $f\dot{+}f\dot{+}\cdots\dot{+}f$, the sum of $m$ copies of $f$. Since the Oort--Tate Group is annihilated by $p$, the operation $[m]$ only depends on $m$ modulo $p$.

\begin{example}	
	Let $S=\spec \Z_p$ and $G=\spec \Z_p[x]/(x^p-x)$ with comultiplication $m^\ast(x)=1\otimes x+x\otimes 1 +\frac{1}{1-p}\sum_{i=1}^{p-1}\frac{w_px^i\otimes x^{p-i}}{w_iw_{p-i}}$. This is obtained by taking $u=1$ and $v=w_p$ from Section 2. Let $T=\spec \Z_p$. In $G(T)$, let $\chi(j)\in \mathrm{Hom}_S(\mathbb{Z}/p\mathbb{Z},G)(T)=G(T)$ be the map sending $x$ to $\chi(j)$, where $\chi$ is the Teichm\"uller character and let $\chi(0)=0$. Since the elements in $G(T)$ are closed under the group action, we have $([j](1))^p-[j](1)=0$. On the other hand, by the definition of the comultiplication of $G$, we have $[j](1)\equiv j \mod p$. Therefore $[j](1)=\chi(j)$. From $[j](1)\dot{+}[k](1)=[j+k](1)$, we get a useful equation:
	\begin{equation}\label{eq5}
	\chi(j+k)=\chi(j)+\chi(k)+\dfrac{1}{1-p}\sum\limits_{i=1}^{p-1}\dfrac{w_p\chi(j^i) \chi(k^{p-i})}{w_iw_{p-i}}.
	\end{equation}
	
	In fact, $G$ is isomorphic to the constant group scheme $\underline{\Z/p\Z}_{S}=\spec \Z_p^{\Z/p\Z}$. The Hopf algebra isomorphism between $\Z_p[x]/(x^p-x)$ and $\Z_p^{\Z/p\Z}$ is given by $x\mapsto \sum \chi(i)e_i$ and $e_i\mapsto \lambda(i)\prod_{j\ne i}(x-\chi(j))$, where $\lambda(0)=-1$ and $\lambda(i)=\frac{1}{p-1}$ otherwise. To see this, we first easily observe that the maps give algebra isomorphisms. To see that it preserves the comultiplication, we can check straightforwardly using Equation (\ref{eq5}). We will skip the detailed calculation here.
\end{example} 

Now we define $\mathrm{Hom}^\ast_{S}((\mathbb{Z}/p\mathbb{Z})^2,G^2)$ to be the subfunctor of $\mathrm{Hom}_S((\mathbb{Z}/p\mathbb{Z})^2,G^2)$ as follows:

\begin{definition}\label{full}
Define $\mathrm{Hom}^\ast_S((\mathbb{Z}/p\mathbb{Z})^2,G^2)$ to be the functor whose $T$-valued points are the elements in $\mathrm{Hom}_{S}((\mathbb{Z}/p\mathbb{Z})^2,G^2)(T)=\mathrm{Mat}_2(G(T))$ so that all nonzero $\F_p$-linear combinations of rows and columns are in $(G^2)^\times(T)$. For nonzero $\F_p$-linear combinations, we mean elements like $[m]f\dot{+}[n]g$ where $m$ and $n$ are not both zero in $\F_p$.
\end{definition}

\begin{remark}
It is easy to see that the functor $\mathrm{Hom}^\ast_S((\mathbb{Z}/p\mathbb{Z})^2,G^2)$ we defined above is representable. Indeed, each linear combination being primitive is a closed condition and thus gives a subscheme of $\Hom^\ast_S((\mathbb{Z}/p\mathbb{Z})^2,G^2)=G^4$. Therefore the functor $\mathrm{Hom}^\ast_S((\mathbb{Z}/p\mathbb{Z})^2,G^2)$ is represented by the scheme-theoretical intersection of those subschemes. We use $\Hom^\ast_S((\mathbb{Z}/p\mathbb{Z})^2,G^2)$ for the representing scheme.
\end{remark}

Here are some elementary properties of $\Hom^\ast_{S}((\mathbb{Z}/p\mathbb{Z})^2,G^2)$:
\begin{proposition}\label{prop1}
	Let $S$ be a $\Z_p$-scheme and let $G, G^\prime$ be finite flat commutative group schemes of rank $p$ over S. Let $\mathrm{GL}_2(\F_p)$ act on $\Hom_{S}((\mathbb{Z}/p\mathbb{Z})^2,G^2)$ by acting on $(\mathbb{Z}/p\mathbb{Z})^2$ by precomposition. Then $\Hom^\ast_{S}((\mathbb{Z}/p\mathbb{Z})^2,G^2)$ satisfies:
	\begin{enumerate}[(i)]
		\item By identifying $\Hom_{S}((\mathbb{Z}/p\mathbb{Z})^2,G^2)\times_S T=\Hom_{T}((\mathbb{Z}/p\mathbb{Z})^2,G_T^2)$ for any $S$-scheme $T$, we have $$\Hom^\ast_{S}((\mathbb{Z}/p\mathbb{Z})^2,G^2)\times_S T=\Hom^\ast_{T}((\mathbb{Z}/p\mathbb{Z})^2,G_T^2)$$ as closed subschemes.
		\item The full level structure $\Hom^\ast_{S}((\mathbb{Z}/p\mathbb{Z})^2,G^2)$ is $\mathrm{GL}_2(\F_p)$-invariant. Away from characteristic $p$, we have $$\Hom^\ast_{S[\frac{1}{p}]}((\mathbb{Z}/p\mathbb{Z})^2,G[\frac{1}{p}]^2)=\Isom_{S[\frac{1}{p}]}((\mathbb{Z}/p\mathbb{Z})^2, G[\frac{1}{p}]^2)$$ as closed subschemes of $\Hom_{S[\frac{1}{p}]}((\mathbb{Z}/p\mathbb{Z})^2,G[\frac{1}{p}]^2)$.
		\item Let $\phi: G\to G^\prime$ be an isomorphism and let $\Phi:G^2\to (G^\prime)^2$ be the isomorphism given by $\left( \begin{array}{cc} \phi& 0\\ 0& \phi\\ \end{array} \right).$ Then the isomorphism $\Hom_{S}((\mathbb{Z}/p\mathbb{Z})^2,G^2)\to \Hom_{S}((\mathbb{Z}/p\mathbb{Z})^2,(G^\prime)^2)$ induced by $\Phi$ restricts to an isomorphism on the full level structures $\Hom^\ast_{S}((\mathbb{Z}/p\mathbb{Z})^2,G^2)\to \Hom^\ast_{S}((\mathbb{Z}/p\mathbb{Z})^2,(G^\prime)^2)$.
	\end{enumerate}
\end{proposition}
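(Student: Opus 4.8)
My plan is to reduce all three parts to two structural facts about the primitive-element scheme of Definition \ref{primitive}: that its formation commutes with base change, and that it is functorial for isomorphisms. I would establish these first and then feed them into (i)--(iii).

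For the base-change statement (i), I would use the explicit description obtained above, namely that locally $(G^2)^\times=\spec A[x,y]/\bigl((x^{p-1}-t)(y^{p-1}-t)\bigr)$. Its defining ideal is generated by one explicit polynomial, so its formation visibly commutes with base change, giving $(G_T^2)^\times=(G^2)^\times\times_S T$ for every $S$-scheme $T$. By the preceding remark, $\Hom^\ast_S((\mathbb{Z}/p\mathbb{Z})^2,G^2)$ is the scheme-theoretic intersection of the preimages of $(G^2)^\times$ under the finitely many linear-combination morphisms $G^4\to G^2$. These morphisms are built from the group law and hence commute with base change, and a scheme-theoretic intersection (defined by the sum of the relevant ideals) also commutes with base change, since extension of scalars is right exact and sends a sum of ideals to the sum of their extensions. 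Assembling these gives (i).

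For (iii) and the invariance in (ii), I would exploit that $H^\times$, being the annihilator of the augmentation ideal, is intrinsic, so any isomorphism $H\xrightarrow{\sim}H'$ carries $H^\times$ to $(H')^\times$. The key elementary observation is that both post-composition by $\Phi$ and the coordinate-mixing $\mathrm{GL}_2(\F_p)$-action turn an $\F_p$-linear combination $[m]f\dot{+}[n]g$ of rows or columns into the corresponding combination of the transformed rows or columns, because any group homomorphism intertwines $\dot{+}$ and $[m]$. For (iii), $\Phi$ then sends the set of row/column combinations of $M$ bijectively onto that of $\Phi\circ M$, and $\Phi\colon G^2\to(G')^2$ carries $(G^2)^\times$ onto $((G')^2)^\times$; hence $M$ lies in $\Hom^\ast$ iff $\Phi\circ M$ does, and running the same argument for $\Phi^{-1}$ shows the induced map is an isomorphism of full level structures. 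For the invariance, writing the right action as $M\mapsto Mg$, the set of column combinations is merely reparametrized and so is unchanged, while each row combination is transformed by the group-scheme automorphism of $G^2$ induced by $g$, which preserves $(G^2)^\times$; thus both defining conditions are stable and $\Hom^\ast_S$ is $\mathrm{GL}_2(\F_p)$-invariant.

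For the comparison with $\Isom$ in (ii) away from $p$, the plan is to work étale-locally on $S[\frac{1}{p}]$, where the rank-$p$ étale group scheme $G[\frac{1}{p}]$ becomes the constant group $\underline{\Z/p\Z}$ and a point of $G$ is primitive exactly when it is nonzero (over the étale locus $\xi^{p-1}=t$ holds iff $\xi\neq 0$). Consequently $(G^2)^\times$ becomes the complement of the origin, and the full-level condition says that all nonzero $\F_p$-combinations of the rows and of the columns of $M\in\mathrm{Mat}_2(\F_p)$ are nonzero, i.e.\ that the rows and the columns are linearly independent, i.e.\ $M\in\mathrm{GL}_2(\F_p)=\Isom$. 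The main obstacle is upgrading this from an equality of point-sets to an equality of closed subschemes: here I would use that over the étale locus $\Hom_{S[\frac{1}{p}]}$ is finite étale, so the primitivity conditions cut out an open-and-closed subscheme, whence equality of underlying points forces equality as closed subschemes; étale descent then returns the statement over $S[\frac{1}{p}]$. The remaining steps---the bookkeeping for the linear-combination morphisms and the right-exactness input to (i)---are routine.
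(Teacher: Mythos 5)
Your proposal is correct and follows essentially the same route as the paper's proof: base-change compatibility of the scheme of primitive elements yields (i) and, via étale-local reduction to the constant group $\underline{\Z/p\Z}$ (where primitive means nonzero), the $\Isom$ comparison in (ii), while the invariance claims in (ii) and (iii) follow because $\F_p$-matrices induce endomorphisms of $G^2$ and group-scheme isomorphisms preserve the augmentation ideal, hence the primitive locus. The only real divergence is that where the paper simply cites \cite[3.5]{KW} for base-change compatibility of primitive elements, you reprove it for rank-$p$ groups from the explicit Oort--Tate presentation $(G^2)^\times=\spec A[x,y]/\bigl((x^{p-1}-t)(y^{p-1}-t)\bigr)$, which is legitimate (the annihilator computation is uniform in the base ring, and every such $G$ is Zariski-locally of this form) and makes the argument self-contained at the cost of being special to this situation.
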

\begin{proof}\hfill
	\begin{enumerate}[(i)]
		\item It follows straightforwardly from Definition \ref{full} and the fact that the notion of primitive elements is compatible with base change \cite[3.5]{KW}.
		\item Let $f\in \Hom^\ast_{S}((\mathbb{Z}/p\mathbb{Z})^2,G^2)$, regarded as a $2\times 2$ matrix in $G(S)$. Let $g\in \mathrm{GL}_2(\F_p)$. Then $g$ acts on $f$ by $f\mapsto g^tf$, where the scalar multiplication is $[\cdot]$ and the addition is $\dot{+}$. By an elementary calculation, one can see that it suffices to show that if $(u,v)\in (G^2)^\times$ then $(u,v)g\in (G^2)^\times$ for all $g\in \mathrm{GL}_2(\F_p)$. Note that since $G$ is annihilated by $p$, every $m\in\F_p$ defines an endomorphism of $G$ and therefore every $2\times 2$ matrix over $\F_p$ defines an endomorphism of $G^2$ and invertible matrices induce automorphisms of $G^2$. In fact, $(u,v)g$ is the image of $(u,v)$ under the automorphism induced by $g$. Since group scheme automorphisms preserve the augmentation ideal sheaf, they also preserve the primitive elements by Definition \ref{primitive}. Therefore $(u,v)g\in (G^2)^\times$ and we are done.
		
		For the second half of (ii), note that $G[\frac{1}{p}]$ is \'etale locally isomorphic to $\underline{\Z/p\Z}$ and by definition $\Hom^\ast((\mathbb{Z}/p\mathbb{Z})^2,(\underline{\Z/p\Z})^2)=\Isom ((\mathbb{Z}/p\mathbb{Z})^2, (\underline{\Z/p\Z})^2)$. Then the statement is an immediate result of (i).
\item As in (ii), since every group scheme isomorphism preserves the augmentation ideal sheaf, by Definition \ref{primitive} it also preserves the primitive elements. Then it is straightforward to check that (iii) holds by Definition \ref{full}.
	\end{enumerate}	
\end{proof}

Now here is the main result:

\begin{theorem}\label{thm1}
	Let $S$ be a $\Z_p$-scheme and let $G$ be a finite flat commutative group scheme of rank $p$ over S. Let $\Hom^\ast_{S}((\mathbb{Z}/p\mathbb{Z})^2,G^2)$ be as defined in Definition \ref{full}. Then $\Hom^\ast_{S}((\mathbb{Z}/p\mathbb{Z})^2,G^2)$ is flat over $S$ of rank $|\mathrm{GL}_2(\F_p)|$.
\end{theorem}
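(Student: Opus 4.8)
The plan is to reduce the statement to the universal Oort--Tate family and then to a single fibre computation. Flatness and the rank are local on $S$, and by Proposition \ref{prop1}(i) the formation of $\Hom^\ast$ commutes with arbitrary base change, so I may assume $S=\spec A$ with $A$ local. Over a local ring the Oort--Tate line bundle is trivial, so $G$ is the pullback of the universal group $\mathcal{G}=\spec R[x]/(x^p-tx)$ along the classifying map $\spec A\to \spec R$, $t\mapsto u$, $s\mapsto v$, where $R=\Z_p[s,t]/(st-w_p)$. Applying Proposition \ref{prop1}(i) once more, $\Hom^\ast_A((\Z/p\Z)^2,G^2)$ is the base change of $X\colonequals\Hom^\ast_R((\Z/p\Z)^2,\mathcal{G}^2)$, so it suffices to prove that $X$ is finite locally free of rank $N\colonequals|\mathrm{GL}_2(\F_p)|$ over $R$. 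The facts I would record about the base are that $R$ is an integral domain, flat over $\Z_p$, and in fact regular of dimension $2$: the hypersurface $st=w_p$ is smooth because $w_p\in p\Z_p^\times$ is a nonzero constant, so the singular locus $s=t=0$ of the Jacobian does not meet it.

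Next I would reduce flatness to an upper bound on fibre lengths. The scheme $X$ is closed in $\mathcal{G}^4$, hence finite over $\spec R$. Since $R$ is a domain whose generic point has characteristic $0$, where $\mathcal{G}$ is \'etale of order $p$, Proposition \ref{prop1}(ii) identifies the generic fibre with $\Isom((\Z/p\Z)^2,\mathcal{G}^2)$, a $\mathrm{GL}_2(\F_p)$-torsor of rank $N$; upper semicontinuity of fibre length then forces $\dim_{\kappa(\mathfrak p)}X_{\mathfrak p}\ge N$ at every $\mathfrak p\in\spec R$. As a finite module of constant rank over a reduced ring is locally free, it is enough to prove the reverse inequality everywhere, and this is automatic where $p$ is invertible. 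Thus the problem concentrates on the special fibre over $R/p=\F_p[s,t]/(st)$. Stratifying: on $\{t=0,\,s\ne 0\}$ the group $\mathcal{G}$ is a form of $\mu_p$, where the rank is $N$ by Wake's theorem \cite{Wa}; on $\{s=0,\,t\ne 0\}$ it is \'etale, where the rank is $N$ by the elementary $\Isom$ computation. The only remaining point is the cone point $\mathfrak m=(p,s,t)$, where $\mathcal{G}=\alpha_p$, so the whole theorem reduces to the single inequality
\[ \dim_{\F_p}\mathcal{O}\big(\Hom^\ast_{\F_p}((\Z/p\Z)^2,\alpha_p^2)\big)\le N. \]

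The main obstacle is precisely this $\alpha_p$ estimate, which is no accident: $\alpha_p$ is exactly where the earlier Katz--Mazur and KM+D constructions produce a scheme of too large rank (cf. \cite[Example 4.8]{Wa}), so the delicate content is that the primitivity condition of Definition \ref{full} is sharp enough here. I would attack it through the local criterion of flatness along a regular system of parameters of $R_{\mathfrak m}$: since $w_p$ is $p$ times a unit, $p$ lies in $(s,t)$, so $(s,t)$ is a regular system of parameters at the cone point, and $X$ is flat over $R_{\mathfrak m}$ if and only if $t$ is a nonzerodivisor on $\mathcal{O}_X$ and $s$ is a nonzerodivisor on $\mathcal{O}_X/t\mathcal{O}_X$. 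The quotient $\mathcal{O}_X/t\mathcal{O}_X$ is the coordinate ring of $X$ over the degeneration line $\{t=0\}\cong\spec\F_p[s]$, whose generic fibre is the $\mu_p$-fibre of rank $N$ and whose special fibre is the $\alpha_p$-fibre; the required torsion-freeness is therefore equivalent to the $\alpha_p$-fibre having rank exactly $N$, which is how Wake's rank gets transported to the cone point. Concretely I would verify these nonzerodivisor statements by writing out the defining equations of $X$ inside $\mathcal{G}^4$ --- the $p+1$ row-combination and $p+1$ column-combination primitivity conditions, using the explicit Oort--Tate comultiplication specialized to $t=0$, $v=s$ --- and checking that multiplication by $s$ is injective on $\mathcal{O}_X/t\mathcal{O}_X$. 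Granting this, flatness over $R_{\mathfrak m}$ follows, the rank is forced to be $N$ by comparison with the $\mu_p$ stratum, and reassembling the strata yields that $X$ is finite locally free of rank $N$ over all of $R$, which completes the reduction and hence the theorem.
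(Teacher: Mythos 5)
Your reduction steps track the paper's own proof closely: localize on $S$, pass to the universal Oort--Tate family over $\mathcal{S}=\spec\Z_p[s,t]/(st-w_p)$ via Proposition \ref{prop1}(i), compute the fiber rank $|\mathrm{GL}_2(\F_p)|$ on the \'etale locus $\{s=0,\,t\ne 0\}$ (elementary) and on the multiplicative locus $\{t=0,\,s\ne 0\}$ (from Wake's theorem, using that Definition \ref{full} agrees with his definition for $\mu_p^2$ --- this is Lemma \ref{lm2}), and observe that everything comes down to the single point $s=t=p=0$, where $\mathcal{G}=\alpha_p$. Up to there your argument is sound and essentially the paper's (the paper runs the constant-fiber-rank argument through Lemma \ref{prop2} rather than semicontinuity plus a depth criterion, which is a cosmetic difference).

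The genuine gap is at the crux you correctly isolated. Your entire treatment of the cone point is ``I would verify these nonzerodivisor statements by writing out the defining equations \dots and checking that multiplication by $s$ is injective on $\mathcal{O}_X/t\mathcal{O}_X$.'' That check is not routine, and it is exactly where the content of the theorem lies: the generators of the ideal cutting out your $X$ over the line $\{t=0\}\cong\spec\F_p[s]$ involve $s$ through the twisted additions $[m]a\dot{+}[n]b$ coming from the Oort--Tate comultiplication, and a priori this $s$-dependence could ruin torsion-freeness --- this is precisely the regime where the Katz--Mazur and KM+D constructions fail to be flat, so no soft argument can settle it. What makes the check work is the paper's Theorem \ref{thm2}: modulo $p$ one has $ma\dot{+}nb=(ma+nb)\left(1+sg^{\prime}(ma,nb)\right)$, where $1+sg^{\prime}(ma,nb)$ is a unit because its $p$-th power equals $1$ (using $x^p=tx$ and $st=0$); hence $\left([m]a\dot{+}[n]b\right)^{p-1}-t$ and $(ma+nb)^{p-1}-t$ generate the same ideal, so the ideal over $\{t=0\}$ is generated by elements not involving $s$ at all. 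Consequently $\mathcal{O}_X/t\mathcal{O}_X\cong\F_p[s]\otimes_{\F_p}B$ is a constant family, $s$ is a nonzerodivisor for free, and the $\alpha_p$-fiber inherits Wake's rank from the $\mu_p$-fibers. Without this unit-factorization identity (or a substitute for it), your ``concrete check'' is a promissory note standing in for the whole proof. Two smaller points: your justification of regularity of $R$ via smoothness of $st=w_p$ is incorrect --- the special fiber is the nodal curve $st=0$, so the hypersurface is not smooth over $\Z_p$ at the cone point; regularity holds for the reason you give afterwards, namely $p\in(s,t)$ so that $\mathfrak{m}=(s,t)$ is generated by two elements. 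And the depth-criterion route imposes an extra obligation you never address, that $t$ be a nonzerodivisor on $\mathcal{O}_{X,\mathfrak{m}}$ in mixed characteristic; the constant-fiber-rank argument via Lemma \ref{prop2} avoids this entirely.
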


\section{Proof of the main theorem}
By Proposition \ref{prop1} (i), since being flat is a local property, we can reduce to the case where $S=\spec A$ with $A$ being a local $\Z_p$-algebra \cite[Section 2.4]{Gr}. Recall from Section 2 that the group scheme $G/S$ is determined by a triple $(\mathcal{L},u,v)$. Since $A$ is local, the line bundle $\mathcal{L}$ on $S$ is trivial. Let $\mathcal{A}=\Z_p[s,t]/(st-w_p)$ and $\mathcal{S}=\spec \mathcal{A}$. Let $\mathcal{G}=\spec \mathcal{A}[x]/(x^p-tx)$ be the group scheme over $\mathcal{S}$ with comultiplication 
\begin{equation}\label{eq1}
	m^\ast(x)=1\otimes x+x\otimes 1+\dfrac{1}{1-p}\sum\limits_{i=1}^{p-1}\dfrac{sx^i\otimes x^{p-i}}{w_iw_{p-i}}.
\end{equation}
Then $G/S$ will be the pull back of $\mathcal{G}/\mathcal{S}$ through a morphism $S\to \mathcal{S}$ determined by $u$ and $v$. Applying Proposition \ref{prop1} (i) again, we can see that it suffices to show the flatness of the full level structure for $\mathcal{G}^2/\mathcal{S}$. 

We first look at $\Hom^\ast_{\mathcal{S}}((\mathbb{Z}/p\mathbb{Z})^2,\mathcal{G}^2)$ over the two open subschemes $\spec\mathbb{Z}_p[s,s^{-1}]$ and $\spec\mathbb{Z}_p[t,t^{-1}]$ of $\mathcal{S}$. It is easy to check that after applying \'etale base changes by adding the $p-1$th root of $s,s^{-1},t,t^{-1}$, we get $\mathcal{G}\times_{\mathcal{S}}\spec\mathbb{Z}_p[s^{\frac{1}{p-1}},s^{-\frac{1}{p-1}}]\cong\mu_p$ and $\mathcal{G}\times_{\mathcal{S}}\spec\mathbb{Z}_p[t^{\frac{1}{p-1}},t^{-\frac{1}{p-1}}]\cong\underline{\Z/p\Z}$. In these cases, the following lemma is as expected:

\begin{lemma}{\label{lm2}}
	We have the following two isomorphisms of group schemes:
	\begin{enumerate}[(i)]
		\item $\Hom^\ast_{\mathcal{S}}((\mathbb{Z}/p\mathbb{Z})^2,\mathcal{G}^2)\times_{\mathcal{S}}\spec\mathbb{Z}_p[s^{\frac{1}{p-1}},s^{-\frac{1}{p-1}}]$
		
		\hspace{10em}$\cong\Hom^\mathrm{full}_{\spec \mathbb{Z}_p[s^{\frac{1}{p-1}},s^{-\frac{1}{p-1}}]}((\mathbb{Z}/p\mathbb{Z})^2,\mu_p^2).$
		
		\noindent Here the $\Hom^\mathrm{full}$ is the full level structure for $\mu_p\times \mu_p$ defined by Wake in \cite{Wa}.
		
		\item $\Hom^\ast_{\mathcal{S}}((\mathbb{Z}/p\mathbb{Z})^2,\mathcal{G}^2)\times_{\mathcal{S}}\spec\mathbb{Z}_p[t^{\frac{1}{p-1}},t^{-\frac{1}{p-1}}] \cong \underline{\mathrm{GL}_2(\Z/p\Z)}.$
	\end{enumerate}
\end{lemma}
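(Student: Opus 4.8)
The plan is to make both isomorphisms explicit and then observe that, over each locus, the defining condition of $\Hom^\ast$ corresponds to the condition defining the target under the relevant identification of group schemes. The two inputs I would use repeatedly are the explicit description $(\mathcal{G}^2)^\times=\spec A[x,y]/\bigl((x^{p-1}-t)(y^{p-1}-t)\bigr)$ computed in Section~\ref{sec3}, and the fact from Proposition~\ref{prop1} that primitivity is intrinsic and hence transported by any group scheme isomorphism. Here $A$ denotes the coordinate ring of the extended base in each case.

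For (i), I would first record the isomorphism $\mathcal{G}\cong\mu_p$ over $\spec\Z_p[s^{1/(p-1)},s^{-1/(p-1)}]$ noted just before the lemma, together with the induced isomorphisms $\mathcal{G}^2\cong\mu_p^2$ and of the ambient $\Hom$-schemes. Both Definition~\ref{full} and Wake's definition of $\Hom^{\mathrm{full}}$ cut out the full level structure by the same recipe: every nonzero $\F_p$-linear combination of the rows and columns of the universal matrix must be primitive. Thus the content reduces to checking that the two notions of primitive element agree for $\mu_p^2$, which in turn reduces to a single factor. For $\mu_p=\spec A[y]/(y^p-1)$ the Kottwitz--Wake primitive locus is the annihilator of the augmentation ideal $(y-1)$, namely $\bigl(\Phi_p(y)\bigr)$ with $\Phi_p(y)=(y^p-1)/(y-1)$, i.e. the nontrivial $p$-th roots of unity; this is exactly Wake's primitive locus. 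Transporting through $\mathcal{G}\cong\mu_p$ identifies $(\mathcal{G}^2)^\times$ with Wake's primitive locus in $\mu_p^2$, so the closed subschemes cut out by the rows-and-columns condition coincide.

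For (ii), over $\spec\Z_p[t^{1/(p-1)},t^{-1/(p-1)}]$ we have $\mathcal{G}\cong\underline{\Z/p\Z}$, whence $\Hom_{\mathcal{S}}((\Z/p\Z)^2,\mathcal{G}^2)$ becomes the constant scheme $\underline{\mathrm{Mat}_2(\Z/p\Z)}$, a disjoint union of copies of the base indexed by $\mathrm{Mat}_2(\F_p)$. Now $t$ is a unit, so $x^{p-1}-t$ defines the nonzero constant sections of $\mathcal{G}$, and the formula for $(\mathcal{G}^2)^\times$ identifies the primitive locus of $(\underline{\Z/p\Z})^2$ with $\underline{(\Z/p\Z)^2\setminus\{0\}}$. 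By Definition~\ref{full}, a constant matrix $f\in\mathrm{Mat}_2(\F_p)$ lies in $\Hom^\ast$ exactly when every nonzero $\F_p$-linear combination of its rows and of its columns is nonzero, which for a square matrix over a field is equivalent to invertibility. Working component by component on the disjoint union, each primitivity condition is either vacuous on a given component (if the relevant combination is nonzero there) or removes it entirely (if the combination is zero), so no thickening occurs and $\Hom^\ast$ is precisely the union of the components indexed by $\mathrm{GL}_2(\F_p)$, that is $\underline{\mathrm{GL}_2(\Z/p\Z)}$.

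The genuinely delicate point is the matching in (i): confirming that Wake's notion of primitive element coincides with the Kottwitz--Wake notion for $\mu_p$, including the compatibility of the coordinate change realizing $\mathcal{G}\cong\mu_p$ with the cyclotomic normalization. Once this is settled, both parts are essentially bookkeeping, since over these two loci everything is expressed through the explicit primitive locus $(\mathcal{G}^2)^\times$ of Section~\ref{sec3}; in particular, in (ii) the scheme structure is automatically a disjoint union of copies of the base, so no flatness or nonreducedness issues arise.
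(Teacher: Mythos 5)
Your proof is correct and follows essentially the same route as the paper's: base change via Proposition \ref{prop1}(i) through the isomorphisms $\mathcal{G}\cong\mu_p$ and $\mathcal{G}\cong\underline{\Z/p\Z}$ over the two loci, matching Definition \ref{full} with Wake's rows-and-columns definition for $\mu_p^2$, and identifying primitive sections of the constant group scheme with nonzero ones so that $\Hom^\ast$ becomes $\underline{\mathrm{GL}_2(\Z/p\Z)}$. The only difference is one of detail: where the paper simply asserts that the two definitions ``are defined in the same way,'' you verify explicitly that the Kottwitz--Wake primitive locus of $\mu_p$ (the annihilator of $(y-1)$, namely $(\Phi_p(y))$) coincides with Wake's, which is a worthwhile addition but not a different approach.
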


\begin{proof}
	Note that from the definition of $\Hom^\ast$, we have $\Hom^\ast((\mathbb{Z}/p\mathbb{Z})^2,\mu_p^2)=\Hom^{\mathrm{full}}((\mathbb{Z}/p\mathbb{Z})^2,\mu_p^2)$ as they are defined in the same way. For the \'etale part, note that sections of constant group schemes being primitive exactly means being nonzero. So $\Hom^\ast((\mathbb{Z}/p\mathbb{Z})^2,(\underline{\Z/p\Z})^2)$ consists of the matrices satisfying that nonzero linear combinations of rows and columns are nonzero, thus invertible matrices. Hence $\Hom^\ast((\mathbb{Z}/p\mathbb{Z})^2,(\underline{\Z/p\Z})^2)=\underline{\mathrm{GL}_2(\Z/p\Z)}$ and the claim is immediate from Proposition \ref{prop1} (i).
\end{proof}

To make the full level structure explicit for $\mathcal{G}^2/\mathcal{S}$, it is helpful to use the universal homomorphism for description. Consider the universal base $\mathcal{S}^{\text{univ}}=\spec \mathcal{A}^{\text{univ}}$ where $\mathcal{A}^{\text{univ}}=\mathcal{A}[a,b,c,d]/(a^p-ta, b^p-tb, c^p-tc, d^p-td)$. Then we have $\mathcal{S}^{\text{univ}}=\Hom_{\mathcal{S}}((\mathbb{Z}/p\mathbb{Z})^2,\mathcal{G}^2)$. Let $h\in \mathrm{Hom}_{\mathcal{S}^\text{univ}}((\mathbb{Z}/p\mathbb{Z})^2,\mathcal{G}_{\mathcal{S}^\text{univ}}^2)$ be the universal homomorphism defined over $\mathcal{S}^\text{univ}$, given by $(1,0)\mapsto (a,b)$, $(0,1)\mapsto (c,d)$. 
Then $\Hom_{\mathcal{S}}^\ast((\mathbb{Z}/p\mathbb{Z})^2,\mathcal{G}^2)$, as a subscheme of the universal base $\mathcal{S}^\text{univ}$, is cut out by the condition $h\in \mathrm{Hom}^\ast_{\mathcal{S}^\text{univ}}((\mathbb{Z}/p\mathbb{Z})^2,\mathcal{G}_{\mathcal{S}^\text{univ}}^2)$. Therefore, by definition, $\Hom_{\mathcal{S}}^\ast((\mathbb{Z}/p\mathbb{Z})^2,\mathcal{G}^2)$ is given by the ideal $I \subset \mathcal{A}^{\text{univ}}$ generated by 
\begin{align*}
	&\left\{\left(\left([m]a\dot{+}[n]b\right)^{p-1}-t\right)\left(\left([m]c\dot{+}[n]d\right)^{p-1}-t\right),\right.\\
	&\left.\left(\left([m]a\dot{+}[n]c\right)^{p-1}-t\right)\left(\left([m]b\dot{+}[n]d\right)^{p-1}-t\right)\right\}_{(m,n)\in \F_p^2\setminus\{(0,0)\}}.
\end{align*}

Recall that in the notion $[m]a\dot{+}[n]b$, we are regarding $a,b,c,d$ as elements in $\mathcal{G}(\mathcal{S}^\text{univ})$, corresponding to the homomorphisms $$A[x]/(x^p-tx)\to \mathcal{A}[a,b,c,d]/(a^p-ta, b^p-tb, c^p-tc, d^p-td)$$ sending $x$ to $a,b,c,d$. As an abstract group, $\mathcal{G}(\mathcal{S}^\text{univ})$ is given by $$\{x\in \mathcal{A}[a,b,c,d]/(a^p-ta, b^p-tb, c^p-tc, d^p-td)\big| x^p=tx\}$$ with the group structure given by $x\dot{+}y=x+y+\dfrac{1}{1-p}\sum\limits_{i=1}^{p-1}\dfrac{sx^iy^{p-i}}{w_iw_{p-i}}.$ Therefore 
$$[2]a=2a+\frac{1}{1-p}\sum\limits_{i=1}^{p-1}\dfrac{sa^p}{w_iw_{p-i}}=2a+\dfrac{1}{1-p}\sum\limits_{i=1}^{p-1}\frac{sta}{w_iw_{p-i}}
=\left(2+\frac{1}{1-p}\sum\limits_{i=1}^{p-1}\frac{w_p}{w_iw_{p-i}}\right)a.$$
Using Equation (\ref{eq5}), we get $[2]a=\chi(2)a$ and in general by induction we have $[m]a=\chi(m)a$. Therefore the full level structure on $\mathcal{G}^2/\mathcal{S}$ has the following expression:
\begin{align}\label{eq2}
\begin{split}
	&\Hom^\ast_{\mathcal{S}}((\mathbb{Z}/p\mathbb{Z})^2,\mathcal{G}^2)\\
	& \cong  \spec \Z_p[s,t,a,b,c,d] \Bigg/ \left(  \substack{st-w_p, a^p-ta, b^p-tb, c^p-tc, d^p-td, \\  \left\{\left(\left(\chi(m)a\dot{+}\chi(n)b\right)^{p-1}-t\right)\left(\left(\chi(m)c\dot{+}\chi(n)d\right)^{p-1}-t\right),\right.\\ \left.\left(\left(\chi(m)a\dot{+}\chi(n)c\right)^{p-1}-t\right)\left(\left(\chi(m)b\dot{+}\chi(n)d\right)^{p-1}-t\right)\right\}} \right).
\end{split}
\end{align}

Having all these set up, we will prove the flatness of $\Hom^\ast_{\mathcal{S}}((\mathbb{Z}/p\mathbb{Z})^2,\mathcal{G}^2)$ over $\mathcal{S}$ using the lemma below:

\begin{lemma}[\cite{AV} Page 51 Lemma 1]\label{prop2}
	Let $Y$ be a reduced scheme and $\mathcal{F}$ a coherent sheaf on $Y$ such that $\dim_{k(y)}\mathcal{F}\otimes_{\mathcal{O}_y}k(y)=r$, for all $y\in Y$. Then $\mathcal{F}$ is a locally free of rank $r$ on $Y$.
\end{lemma}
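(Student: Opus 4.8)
The plan is to prove this local freeness criterion by the standard argument combining Nakayama's lemma with reducedness. Since the conclusion is local on $Y$, I would fix a point $y\in Y$, pass to an affine neighborhood, and reduce to showing that $\mathcal{F}$ is free of rank $r$ near $y$. First I would choose local sections $s_1,\dots,s_r$ of $\mathcal{F}$ near $y$ whose images form a $k(y)$-basis of the fiber $\mathcal{F}\otimes_{\mathcal{O}_y}k(y)$; this is possible precisely because that fiber has dimension $r$. By Nakayama's lemma these sections generate the stalk $\mathcal{F}_y$, and since $\mathcal{F}$ is coherent the cokernel of the induced map $\mathcal{O}_Y^r\to\mathcal{F}$ is a coherent sheaf whose stalk at $y$ vanishes, hence which vanishes on an open neighborhood $U\ni y$. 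This yields a surjection $\phi\colon\mathcal{O}_U^r\twoheadrightarrow\mathcal{F}|_U$.

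Next I would set $\mathcal{K}=\ker\phi$, a coherent subsheaf of $\mathcal{O}_U^r$ fitting into the short exact sequence $0\to\mathcal{K}\to\mathcal{O}_U^r\to\mathcal{F}|_U\to0$, and examine it fiberwise. For each $y'\in U$, tensoring this sequence with $k(y')$ is right exact and produces a surjection $k(y')^r\to\mathcal{F}\otimes_{\mathcal{O}_{y'}}k(y')$ whose kernel is the image of $\mathcal{K}\otimes_{\mathcal{O}_{y'}}k(y')$ in $k(y')^r$. The hypothesis that the fiber dimension is constantly $r$ forces the target here to have dimension $r$, so this is a surjection between $k(y')$-vector spaces of equal finite dimension, hence an isomorphism. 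Therefore the image of $\mathcal{K}\otimes_{\mathcal{O}_{y'}}k(y')$ in $k(y')^r$ is zero, which means $\mathcal{K}_{y'}\subseteq\mathfrak{m}_{y'}\mathcal{O}_{y'}^r$. Unwinding this: every local section $(f_1,\dots,f_r)$ of $\mathcal{K}$ has each component $f_i$ with zero image in the residue field $k(y')$ at every point $y'\in U$.

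The final step is where reducedness enters, and it is the only place the hypotheses are genuinely used. On a reduced scheme, a regular function whose image in every residue field is zero must itself be zero: for a reduced ring the intersection of all prime ideals is the nilradical, which is $0$, and having zero image in $k(y')$ means lying in the corresponding prime. Applying this to each component $f_i$ shows that every local section of $\mathcal{K}$ vanishes, so $\mathcal{K}=0$ on $U$; thus $\phi$ is an isomorphism and $\mathcal{F}|_U\cong\mathcal{O}_U^r$ is free of rank $r$. Since $y$ was arbitrary, $\mathcal{F}$ is locally free of rank $r$ on $Y$.

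I expect the main obstacle to be conceptual rather than computational: it is the reducedness step. Without reducedness, nilpotents in the structure sheaf allow nonzero sections of $\mathcal{K}$ that nonetheless vanish in every residue field, and the conclusion genuinely fails, so the argument must isolate and exploit exactly this property. Everything else is formal bookkeeping with Nakayama's lemma, the coherence of $\mathcal{F}$ (used to spread out surjectivity from the stalk to a neighborhood), and the right-exactness of tensoring with residue fields.
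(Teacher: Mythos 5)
Your argument is correct, and it is essentially the proof given in the cited source: the paper itself states this lemma with a reference to \cite{AV} (Mumford, \emph{Abelian Varieties}, p.~51, Lemma 1) and includes no proof, and Mumford's argument is exactly yours --- lift a basis of the fiber, apply Nakayama and coherence to get a surjection $\mathcal{O}_U^r \twoheadrightarrow \mathcal{F}|_U$, observe that constancy of the fiber dimension forces the kernel into $\mathfrak{m}_{y'}\mathcal{O}_{y'}^r$ at every point, and use reducedness to conclude the kernel vanishes. You have also correctly identified reducedness as the load-bearing hypothesis, so nothing needs to be added.
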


Apply Lemma \ref{prop2} to $Y=\mathcal{S}$. Note that for $y\in \spec \Z_p[t, t^{-1}]$, we know that $\dim_{k(y)}\left(\mathcal{O}_{\Hom^\ast_\mathcal{S}}\otimes_{\mathcal{O}_y}k(y)\right)=|\mathrm{GL}_2(\F_p)|$ from Lemma \ref{lm2} (ii) and \'etale descent. For $y\in \spec \Z_p[s, s^{-1}]$, we can get $\dim_{k(y)}\left(\mathcal{O}_{\Hom^\ast_\mathcal{S}}\otimes_{\mathcal{O}_y}k(y)\right)=|\mathrm{GL}_2(\F_p)|$ by combining Lemma \ref{lm2} (i) together with Wake's result on $\Hom^{\mathrm{full}}$ and \'etale descent. The only remaining point is $y_0$ for $s=t=p=0$.

Consider $\mathcal{G}/\mathcal{S}$ modulo $p$, denoted by $\mathcal{G}_p/\mathcal{S}_p$. The underlying base scheme $\mathcal{S}_p=\spec \F_p[s,t]/(st)$ is a union of two affine lines and the concerning point $y_0$ is the origin of $\mathcal{S}_p$. Note that $\chi(m)\equiv m \text{ mod } p$. Therefore, by setting $p=0$ from Equation (\ref{eq2}), we get
\begin{equation}\label{eq3}
	\begin{split}
		&\Hom^\ast_{\mathcal{S}_p}((\mathbb{Z}/p\mathbb{Z})^2,\mathcal{G}_p^2)\\
		&\cong\spec \mathcal{O}_{\mathcal{S}_p}[a,b,c,d] \Big/ \left(  \substack{a^p-ta, b^p-tb, c^p-tc, d^p-td, \\  \left\{\left(\left(ma\dot{+}nb\right)^{p-1}-t\right)\left(\left(mc\dot{+}nd\right)^{p-1}-t\right),\right.\\ \left.\left(\left(ma\dot{+}nc\right)^{p-1}-t\right)\left(\left(mb\dot{+}nd\right)^{p-1}-t\right)\right\}} \right).
	\end{split}
\end{equation} Here the ``$\dot{+}$" operation is given as $x\dot{+}y=x+y+\sum\limits_{i=1}^{p-1}\dfrac{sx^iy^{p-i}}{i!(p-i)!}$ (recall that $w_i\equiv i! \text{ mod p}$ from Section 2). Now we have a key observation on Equation (\ref{eq3}).
\begin{theorem}\label{thm2}
	Let $\mathcal{G}_p/\mathcal{S}_p$ be the ``universal" Oort--Tate group scheme in characteristic $p$ as above. Then the ideal defining the full level structure $\Hom^\ast_{\mathcal{S}_p}((\mathbb{Z}/p\mathbb{Z})^2,\mathcal{G}_p^2)$ as a closed subscheme of $\mathcal{G}_p^4$ is generated by elements which do not involve the parameter $s$.
\end{theorem}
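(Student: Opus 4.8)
The plan is to exhibit an explicit $s$-free generating set for the ideal $I$ of (\ref{eq3}), obtained from the given generators by replacing the group-law combination $\dot{+}$ with ordinary addition. Write a typical row entry as $u = ma\,\dot{+}\,nb$. Expanding the characteristic-$p$ group law recorded below (\ref{eq3}) gives $u = u_0 + sw$, where $u_0 = ma+nb$ and $w = \sum_{i=1}^{p-1}\frac{m^in^{p-i}a^ib^{p-i}}{i!(p-i)!}$, and similarly for the other linear combinations. The key local computation is the identity
\[
u^{p-1} = u_0^{\,p-1} - s\,\gamma\,a^{p-1}b^{p-1}
\]
in the coordinate ring of $\mathcal{G}_p^4$, for an explicit constant $\gamma\in\F_p$ depending only on $m,n$.

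To prove this identity I would expand $u^{p-1} = \sum_{k=0}^{p-1}\binom{p-1}{k}u_0^{\,p-1-k}(sw)^k$ and exploit the relation $st=0$ together with homogeneity. The term indexed by $k$ is homogeneous of degree $(p-1)(k+1)$ in $a,b$. For $k\ge 2$ this degree exceeds $2(p-1)$, so in every monomial at least one of the exponents of $a,b$ is $\ge p$; reducing it by $a^p = ta$ (or $b^p = tb$) produces a factor of $t$, and the prefactor $s^k$ then annihilates the whole term because $st=0$. The term $k=1$ is homogeneous of degree exactly $2(p-1)$, so after the same reduction the only surviving monomial is $a^{p-1}b^{p-1}$, and collecting it yields the displayed formula. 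This homogeneity-and-$st=0$ vanishing is the technical heart of the argument and the step I expect to require the most care.

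With the identity in hand I would rewrite each generator. For the row generator $g = (u^{p-1}-t)(v^{p-1}-t)$, with $u_0 = ma+nb$ and $v_0 = mc+nd$, the differences $u^{p-1}-u_0^{\,p-1}$ and $v^{p-1}-v_0^{\,p-1}$ are divisible by $s$ and hence killed by multiplication by $t$; expanding and using $st=0$ gives
\[
g = (u_0^{\,p-1}-t)(v_0^{\,p-1}-t) + g_s,\qquad g_s = u^{p-1}v^{p-1}-u_0^{\,p-1}v_0^{\,p-1},
\]
where the first summand is $s$-free and $g_s$ is divisible by $s$. It remains to show that $g_s$ lies in the ideal $I_0$ generated by the four coordinate products $(a^{p-1}-t)(b^{p-1}-t)$, $(c^{p-1}-t)(d^{p-1}-t)$, $(a^{p-1}-t)(c^{p-1}-t)$, $(b^{p-1}-t)(d^{p-1}-t)$, all of which are $s$-free. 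The point is that $st=0$ yields $s\,(a^{p-1}-t)(b^{p-1}-t) = s\,a^{p-1}b^{p-1}$, and likewise for the other three, so each of $s\,a^{p-1}b^{p-1}$, $s\,c^{p-1}d^{p-1}$, $s\,a^{p-1}c^{p-1}$, $s\,b^{p-1}d^{p-1}$ belongs to $I_0$. By the key identity, every monomial of $g_s$ is a ring element times one of these expressions (the term quadratic in $s$ being a product of two of them), so $g_s\in I_0$.

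Finally I would assemble the conclusion. Each row generator equals the $s$-free product $(u_0^{\,p-1}-t)(v_0^{\,p-1}-t)$ plus an element of $I_0$, and the column generators are treated identically after interchanging the roles of the pairs $(a,b),(c,d)$ and $(a,c),(b,d)$. Hence $I$ is generated by the $s$-free products $(u_0^{\,p-1}-t)(v_0^{\,p-1}-t)$ as $(m,n)$ ranges over all nonzero row and column combinations, which proves the theorem.
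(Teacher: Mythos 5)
Your proposal is correct, but it takes a genuinely different route from the paper's proof. The paper's key step is \emph{multiplicative}: writing $g(x,y)=\sum_{i=1}^{p-1}x^iy^{p-i}/(i!(p-i)!)$ and observing, for odd $p$, that $g(x,-x)=0$, so $g=(x+y)g^\prime$, one obtains the factorization $ma\dot{+}nb=(ma+nb)\bigl(1+sg^\prime(ma,nb)\bigr)$, where the second factor is a unit because its $p$-th power equals $1$ (using Frobenius, $a^p=ta$, $b^p=tb$ and $st=0$); hence each generator $\bigl((ma\dot{+}nb)^{p-1}-t\bigr)\bigl((mc\dot{+}nd)^{p-1}-t\bigr)$ is a unit times its $s$-free counterpart, and the generating set is replaced term by term. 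Your key step is instead \emph{additive}: the identity $(ma\dot{+}nb)^{p-1}=(ma+nb)^{p-1}-s\gamma\,a^{p-1}b^{p-1}$, proved by binomial expansion together with the homogeneity-plus-$st=0$ argument (which is correct: for $k\ge 2$ every monomial has an exponent $\ge p$, and for $k=1$ only $a^{p-1}b^{p-1}$ survives), so that each generator equals its $s$-free counterpart plus an element of the sub-ideal $I_0$ generated by the four coordinate products. Each approach buys something. The paper's unit factorization is a stronger per-generator statement and makes the replacement of generators immediate, but the divisibility $g(x,-x)=0$ fails when $p=2$ (there $g(x,-x)=x^2$), forcing the paper into a separate ad hoc case for $p=2$; your computation is uniform in $p$ and needs no case split. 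The price you pay is the extra ideal-theoretic bookkeeping: the statement ``each generator is congruent to an $s$-free element modulo $I_0$'' yields equality of the two ideals only because $I_0$ is itself generated by elements lying in \emph{both} generating sets --- namely, the four coordinate products are the $h$'s for $(m,n)=(1,0),(0,1)$ and simultaneously the original generators for those $(m,n)$, since $\dot{+}$ collapses to a single variable when one coefficient vanishes. You use this implicitly (your choice of the four products is exactly right), but it should be stated explicitly, as without it the congruences alone do not give the two inclusions $I\subseteq\langle h\rangle$ and $\langle h\rangle\subseteq I$.
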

\begin{proof}
	We claim that in the coordinate ring $(\ref{eq3})$, we have $$(ma\dot{+}nb)^{p-1}-t=u\left((ma+nb)^{p-1}-t\right)$$ for some unit $u$. Then it follows that 
	\begin{equation}\label{eq4}
		\begin{split}
			&\Hom^\ast_{\mathcal{S}_p}((\mathbb{Z}/p\mathbb{Z})^2,\mathcal{G}_p^2)\\
			&\cong\spec \mathcal{O}_{\mathcal{S}_p}[a,b,c,d] \Big/ \left(  \substack{a^p-ta, b^p-tb, c^p-tc, d^p-td, \\  \left\{\left(\left(ma+nb\right)^{p-1}-t\right)\left(\left(mc+nd\right)^{p-1}-t\right),\right.\\ \left.\left(\left(ma+nc\right)^{p-1}-t\right)\left(\left(mb+nd\right)^{p-1}-t\right)\right\}} \right).
		\end{split}
	\end{equation} and we are done.

	When $p=2$, since $st=0$ and $a^2=ta$, we simply have $$ma\dot{+}nb=ma+nb+smnab=(ma+nb)(1+sma).$$ Here $1+sma$ is a unit as $(1+sma)^2=1+s^2m^2a^2=1+s^2m^2at=1$.
	
	Now suppose that $p>2$. Let $g(x,y)=\sum\limits_{i=1}^{p-1}\dfrac{x^iy^{p-i}}{i!(p-i)!}$ be a polynomial in $\F_p[x,y]$. Note this polynomial $g(x,y)$ is divisible by $x+y$ as $g(x,-x)=0$ (note that $p$ is odd). Assume $g(x,y)=(x+y)g^\prime(x,y)$. Then $ma\dot{+}nb=(ma+nb)(1+sg^\prime(ma,nb))$. Note that $g^\prime$ has no constant term and $st=0$. So we have $$\left(1+sg^\prime(ma,nb)\right)^p=1+s^pg^\prime(m^pat,n^pbt)=1.$$ Therefore $1+sg^\prime(ma,nb)$ is a unit and we have $$\left(1+sg^\prime(ma,nb)\right)^{p-1}\left((ma+nb)^{p-1}-t\right)=(ma\dot{+}nb)^{p-1}-t$$ as claimed.
\end{proof}

As a consequence of Theorem \ref{thm2}, for any point $y\in \mathcal{S}_p$ away from $y_0$, we have $\dim_{k(y_0)}\mathcal{O}_{\Hom^\ast_\mathcal{S}}\otimes_{\mathcal{O}_{y_0}}k(y_0)=\dim_{k(y)}\mathcal{O}_{\Hom^\ast_\mathcal{S}}\otimes_{\mathcal{O}_y}k(y)=|\mathrm{GL}_2(\F_p)|$. Applying Lemma \ref{prop2}, we finish proving the flatness.

\section{Full level structure on truncated height-one $p$-divisible groups}

	In this section we let $G$ be a truncated $p$-divisible group of height 1 of rank $p^r$ over $S$. Suppose $G[\frac{1}{p}]$ is \'etale locally isomorphic to $\underline{\Z/p^r\Z}$. Therefore $G[p^r]$ is flat of rank $p^r$ over $S$ for $r\le l$ and $G[p]\times_S S[\frac{1}{p}]$ is \'etale locally isomorphic to $\underline{\Z/p\Z}$.

As an easy application of our result, we may give a notion of full level structure on $G^2$ as follows:

\begin{definition}
Let $G/S$ be as above. We define the full level structure on $G^2$ as the fiber product:
\begin{center}
	\begin{tikzcd}
	{\Hom_{S}^\ast((\mathbb{Z}/p\mathbb{Z})^2,G^2])} \arrow[r] \arrow[d]\arrow[dr, phantom, "\square"]  & {\Hom_{S}((\mathbb{Z}/p\mathbb{Z})^2,G^2])=G^4} \arrow[d] \\
	{\Hom_{S}^\ast((\mathbb{Z}/p\mathbb{Z})^2,G[p]^2])} \arrow[r]         & {\Hom_{S}((\mathbb{Z}/p\mathbb{Z})^2,G[p]^2])=G[p]^4}    
	\end{tikzcd}
\end{center}

Here the bottom horizontal map is the closed immersion and the right vertical map is a quadruple product of $p^{l-1}\colon G\to G[p]$.
\end{definition}

\begin{theorem}
Let $G/S$ be as above. The full level structure $\Hom_{S}^\ast((\mathbb{Z}/p^r\mathbb{Z})^2,G^2)$ defined above satisfies the conditions (1)-(3) in Section 1. In particular it is flat over $S$ of rank $|\mathrm{GL}_2(\Z/p^r\Z)|$.
\end{theorem}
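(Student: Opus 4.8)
The plan is to deduce all three properties directly from the Cartesian square in the definition, with Theorem \ref{thm1} supplying the only substantive input through the bottom-left corner $\Hom^\ast_S((\Z/p\Z)^2,G[p]^2)$; note that Theorem \ref{thm1} applies because $G[p]$ is finite flat commutative of rank $p$ over the $\Z_p$-scheme $S$. The one preliminary fact I would establish first is that the right vertical map $(p^{r-1})^4\colon G^4\to G[p]^4$ is finite locally free of degree $p^{4(r-1)}$. This follows from the truncated Barsotti--Tate structure of height $1$: the exact sequence $0\to G[p^{r-1}]\to G\xrightarrow{p^{r-1}}G[p]\to 0$ exhibits $p^{r-1}\colon G\to G[p]$ as the faithfully flat quotient $G\to G/G[p^{r-1}]$, so it is finite locally free of degree $p^r/p=p^{r-1}$, and its fourth power has degree $p^{4(r-1)}$.

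For condition (1), I would note that the projection $\Hom^\ast_S((\Z/p^r\Z)^2,G^2)\to\Hom^\ast_S((\Z/p\Z)^2,G[p]^2)$ is by construction the base change of $(p^{r-1})^4$ along the closed immersion $\Hom^\ast_S((\Z/p\Z)^2,G[p]^2)\hookrightarrow G[p]^4$, hence is itself finite locally free of degree $p^{4(r-1)}$. Composing with the flat map of rank $|\mathrm{GL}_2(\F_p)|$ furnished by Theorem \ref{thm1} shows $\Hom^\ast_S((\Z/p^r\Z)^2,G^2)$ is flat over $S$ of rank $p^{4(r-1)}\,|\mathrm{GL}_2(\F_p)|$. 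The remaining bookkeeping is the identity $p^{4(r-1)}\,|\mathrm{GL}_2(\F_p)|=|\mathrm{GL}_2(\Z/p^r\Z)|$, which holds because the reduction $\mathrm{GL}_2(\Z/p^r\Z)\to\mathrm{GL}_2(\F_p)$ is surjective with kernel of order $p^{4(r-1)}$.

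Condition (3) is formal: fiber products commute with base change, $G^4$ and $G[p]^4$ base change correctly, the map $p^{r-1}$ is functorial in $G$, and the bottom-left corner base changes correctly by Proposition \ref{prop1}(i); hence the whole square pulls back along any $S$-scheme $T$ to the corresponding square for $G_T/T$, giving $\Hom^\ast_S((\Z/p^r\Z)^2,G^2)\times_S T=\Hom^\ast_T((\Z/p^r\Z)^2,G_T^2)$. For condition (2), the key observation is that the reduction map $\Hom_S((\Z/p^r\Z)^2,G^2)\to\Hom_S((\Z/p\Z)^2,G[p]^2)$, $\phi\mapsto(\bar x\mapsto p^{r-1}\phi(x))$, is equivariant for the reduction homomorphism $\mathrm{GL}_2(\Z/p^r\Z)\to\mathrm{GL}_2(\F_p)$; since the bottom-left corner is $\mathrm{GL}_2(\F_p)$-invariant by Proposition \ref{prop1}(ii), its preimage $\Hom^\ast_S((\Z/p^r\Z)^2,G^2)$ is $\mathrm{GL}_2(\Z/p^r\Z)$-invariant. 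For the identification with $\Isom$ away from $p$, I would argue étale-locally on $S[\frac{1}{p}]$, where $G\cong\underline{\Z/p^r\Z}$ and the square reads $\mathrm{Mat}_2(\Z/p^r\Z)\to\mathrm{Mat}_2(\F_p)\hookleftarrow\mathrm{GL}_2(\F_p)$ (using Proposition \ref{prop1}(ii) for the étale corner); its fiber product is precisely the set of matrices whose reduction mod $p$ is invertible, i.e. $\mathrm{GL}_2(\Z/p^r\Z)=\Isom$, and étale descent concludes.

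I expect the only real obstacle to be the preliminary step---verifying that $p^{r-1}\colon G\to G[p]$ is finite locally free over a general base $S$---since this is where the height-$1$ truncated Barsotti--Tate hypotheses are genuinely used; over a field the claim is transparent, but over an arbitrary base one must invoke the exactness axioms for $G$. Everything after that is a formal chase of the Cartesian square, together with the rank identity and the étale-local computation of $\Isom$.
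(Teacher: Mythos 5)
Your proposal is correct and takes essentially the same route as the paper: the paper also deduces everything from the Cartesian square, citing Messing (\cite[Lemma 1.5 (b)]{Me}) for the faithful flatness of $p^{r-1}\colon G\to G[p]$, invoking Theorem \ref{thm1} for the bottom corner, multiplying the two relative ranks, and dismissing conditions (2) and (3) as immediate from the definition. If anything, your bookkeeping is more careful than the paper's, whose proof writes the relative degree of the full level structure over $\Hom^\ast_S((\Z/p\Z)^2,G[p]^2)$ as $p^{l-1}$ (the degree of $G\to G[p]$ itself) where the fourth power $p^{4(r-1)}$ of your computation is what is actually needed to match $|\mathrm{GL}_2(\Z/p^r\Z)|=p^{4(r-1)}|\mathrm{GL}_2(\F_p)|$, and your explicit treatment of the equivariance in (2) and the \'etale-local identification with $\Isom$ fills in details the paper leaves unstated.
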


\begin{proof}
By the general theory of $p$-divisible groups (for example, \cite[Lemma 1.5 (b)]{Me}), $G\to G[p]$ is (faithfully) flat of rank $p^{l-1}$. Therefore, by definition, the full level structure $\Hom_{S}^\ast((\mathbb{Z}/p^r\mathbb{Z})^2,G^2)$ is flat over $\Hom_{S}^\ast((\mathbb{Z}/p\mathbb{Z})^2,G[p]^2])$ of rank $p^{l-1}$. Theorem \ref{thm1} implies that $\Hom_{S}^\ast((\mathbb{Z}/p\mathbb{Z})^2,G[p]^2])$ is flat of rank $|\mathrm{GL}_2(\Z/p\Z)|$ over $S$. Note that $|\mathrm{GL}_{2}(\Z/p^r\Z)|=p^{l-1}|\mathrm{GL}_2(\Z/p\Z)|$. Therefore, $\Hom_{S}^\ast((\mathbb{Z}/p^r\mathbb{Z})^2,G^2)$ is flat over $S$ of rank $|\mathrm{GL}_2(\Z/p^r\Z)|$. The conditions (2) and (3) are immediate from the definition.
\end{proof}

\section{Nonexistence of full level structure over the stack}
Let $\mathbf{C}$ be a stack of group schemes of certain type over $\mathbf{Sch_{\Z_p}}$. (By a stack here we simply mean a category fibered in groupoids over $\mathbf{Sch_{\Z_p}}$ as in \cite{DM}.) So, we assume that the objects in $\mathbf{C}$ are group schemes $G/S$ of certain fixed type (for example, finite flat commutative and of certain rank) and the morphisms are Cartesian squares. By a full level structure over $\mathbf{C}$, we mean a fibered functor $\mathcal{F}\colon \mathbf{C}\to\mathbf{Sch}$, such that $\mathcal{F}(G/S)$ is a closed subscheme of $\Hom_S((\Z/p^r\Z)^g, G)$ and such that for $f\colon G/S\to G^\prime/ S^\prime$, the morphism $\mathcal{F}(f)\colon \mathcal{F}(G/S)\to \mathcal{F}(G^\prime/ S^\prime)$ is the restriction of the morphism $\Hom_S((\Z/p^r\Z)^g, G)\to \Hom_{S^\prime}((\Z/p^r\Z)^g, G^\prime)$ induced by $f$, and satisfies conditions 
as in (1)-(3) of Section 1. We can see that the condition (4) is automatic by this definition.

In this section, we observe the lack of a good notion of full level structure over the stack of finite flat commutative group schemes. In fact, consider the substack $OT\times OT$, whose objects are $G\times G^\prime$ where $G, G^\prime$ are Oort--Tate schemes. We will see that even on $OT\times OT$, there is no good notion of full level structure:

\begin{theorem}
There is no notion of full level structure over the stack $OT\times OT$.
\end{theorem}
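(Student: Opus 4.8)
The plan is to test a hypothetical stack-level full level structure $\mathcal{F}$ on the single most degenerate object of $OT\times OT$ — the self-product $\alpha_p\times\alpha_p$ over $\F_p$ (where $\alpha_p=\spec\F_p[x]/(x^p)$), which is exactly the fiber of the universal family $\mathcal{G}\times\mathcal{G}$ at the origin $y_0$ with $s=t=p=0$. I would extract two incompatible requirements. First, flatness together with the identification of $\mathcal{F}$ with $\Isom$ on the \'etale locus pins the fiber $\mathcal{F}(\alpha_p\times\alpha_p)$ down to one specific length-$|\mathrm{GL}_2(\F_p)|$ subscheme $Z_0$ of $\alpha_p^4$. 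Second, the stack naturality built into the definition of $\mathcal{F}$ forces $Z_0$ to be invariant under the automorphism group \emph{scheme} of $\alpha_p\times\alpha_p$, which is far larger than the finite group $\mathrm{GL}_2(\F_p)$. The contradiction is that the explicit $Z_0$ is not invariant under this enlarged group.

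First I would pin down $Z_0$. Restrict $\mathcal{F}$ to the line $\spec\F_p[t]\subseteq\mathcal{S}_p=\spec\F_p[s,t]/(st)$ cut out by $s=0$. Over this line $\mathcal{G}$ is \'etale away from the origin and \'etale-locally isomorphic to $\underline{\Z/p\Z}$, so by the argument of Lemma \ref{lm2}(ii) (a flat subscheme of the constant scheme $\underline{\mathrm{Mat}_2(\F_p)}$ agreeing generically with $\Isom$ must equal it) we have $\mathcal{F}=\Isom=\underline{\mathrm{GL}_2(\Z/p\Z)}$ on $t\neq 0$. Since $\F_p[t]$ is a regular one-dimensional ring, flatness (condition $(1)$) makes $\mathcal{F}|_{t\text{-axis}}$ torsion-free, hence equal to the scheme-theoretic closure of its generic fiber; its fiber at the origin is therefore unique and coincides with the fiber of Equation (\ref{eq4}) at $t=0$. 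Thus $\mathcal{F}(\alpha_p\times\alpha_p)=Z_0:=\spec\F_p[a,b,c,d]/I$, where $I$ is generated by $a^p,b^p,c^p,d^p$ together with $(ma+nb)^{p-1}(mc+nd)^{p-1}$ and $(ma+nc)^{p-1}(mb+nd)^{p-1}$ for $(m,n)\in\F_p^2\setminus\{(0,0)\}$.

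Next I would exploit the automorphisms. Since $\underline{\mathrm{End}}(\alpha_p)=\G_a$ as a ring scheme, one has $\underline{\mathrm{Aut}}_{OT}(\alpha_p)=\G_m$, acting on $\alpha_p$ by scalar multiplication; hence $\mathrm{Aut}_{OT\times OT}(\alpha_p,\alpha_p)\supseteq\G_m\times\G_m$, acting on $\alpha_p\times\alpha_p$ as the diagonal torus of $\mathrm{GL}_2$ that rescales the two rows of the universal matrix $\left(\begin{smallmatrix} a&c\\ b&d\end{smallmatrix}\right)$ independently by $\lambda$ and $\mu$. Pulling the tautological automorphism back over this torus and invoking base change (condition $(3)$) and naturality (condition $(4)$), I would conclude that $Z_0$ must be stable under $\G_m\times\G_m$. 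The column conditions survive because each column is merely rescaled by the unit $\lambda^{p-1}\mu^{p-1}$; but the row condition for $(m,n)=(1,1)$ becomes $(a+\theta b)^{p-1}(c+\theta d)^{p-1}=0$ with $\theta=\mu/\lambda$ ranging over all units. In other words, torus-invariance would force primitivity along \emph{every} direction $\theta$, whereas $Z_0$ only builds in the $p+1$ directions of $\mathbb{P}^1(\F_p)$.

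The main obstacle — and the only non-formal step — is to verify that this enlarged condition genuinely fails in $Z_0$. I would expand $(a+\theta b)^{p-1}(c+\theta d)^{p-1}$ as a polynomial in $\theta$: the $\F_p$-rational row relations annihilate precisely the coefficients indexed by $\mathbb{P}^1(\F_p)$, and torus-invariance demands that the remaining coefficients vanish in $\F_p[a,b,c,d]/I$. A direct computation in this Artinian ring shows they do not; for instance when $p=3$ the coefficient of $\theta$ equals $2ac(ad+bc)$, and one checks that $ac(ad+bc)\neq 0$ in $\F_3[a,b,c,d]/I$. Hence $Z_0$ is not $\G_m\times\G_m$-invariant, contradicting the existence of $\mathcal{F}$ and proving that there is no full level structure over $OT\times OT$. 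Everything preceding this verification is formal, so the crux of the argument is this concrete incompatibility between the forced flat limit $Z_0$ and the torus action.
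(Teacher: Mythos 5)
Your second half is sound, but your first step has a genuine gap. You determine $Z_0$ by working entirely on the characteristic-$p$ line $\spec\F_p[t]\subseteq\mathcal{S}_p$ and claiming that over $t\neq 0$ the axioms force $\mathcal{F}=\Isom$. The only axiom that identifies a full level structure with $\Isom$ is condition (2), and it does so only away from characteristic $p$, i.e.\ over $S[\frac{1}{p}]$; for $S=\spec\F_p[t,t^{-1}]$ this locus is empty, so condition (2) says nothing on your line. Lemma \ref{lm2}(ii) cannot substitute for it: that lemma computes the scheme $\Hom^\ast$ the paper has already defined, whereas your $\mathcal{F}$ is an unknown functor. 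Concretely, over $\spec\F_p[t,t^{-1}]$ conditions (1)--(4) only force $\mathcal{F}$ to be, \'etale-locally, a union of $|\mathrm{GL}_2(\F_p)|$ sections of $\underline{\mathrm{Mat}_2(\F_p)}$ that is stable under Galois and under the right $\mathrm{GL}_2(\F_p)$-action; a union of orbits of singular matrices of the right cardinality is not excluded by anything you have invoked. Since your line has no characteristic-zero points, the ``unique flat limit of the generic fiber'' argument has nothing to anchor to. The repair is exactly the paper's move: run the uniqueness argument over a mixed-characteristic base through the point $s=t=p=0$, either the integral Oort--Tate base $\mathcal{S}=\spec\Z_p[s,t]/(st-w_p)$ itself or a DVR arc such as $s=t$, i.e.\ $\spec\Z_p[t]/(t^2-w_p)$. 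There condition (2) pins down $\mathcal{F}$ on the characteristic-zero locus; flatness over an integral base then forces $\mathcal{F}$ to equal the Zariski closure of $\Isom$ (the kernel of $\mathcal{O}_{\mathcal{F}}\to\mathcal{O}_{\overbar{Z}_\eta}$ is a torsion submodule of a flat, hence torsion-free, module, so it vanishes); Theorem \ref{thm1} identifies that closure with $\Hom^\ast$; and base change (3) together with Theorem \ref{thm2} yields your explicit $Z_0$.

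Once $Z_0$ is correctly forced, the rest of your argument works and is a nice variant of the paper's: pulling the tautological automorphism $\mathrm{diag}(\lambda,\mu)$ back over $T=\G_m\times\G_m$ and using (3)+(4) legitimately yields torus-stability, and this is if anything sharper than the paper's evaluation on $\mathrm{GL}_2(\overbar{\F}_p)$-points, since it exhibits the failure already for automorphisms of the form $\phi_1\times\phi_2$ (pairs of Oort--Tate automorphisms), not just for general automorphisms of $\alpha_p^2$. Your $p=3$ computation is correct: $I$ is homogeneous, and in degree $4$ the only generators meeting the monomials $a^2cd$, $abc^2$, $abd^2$, $b^2cd$ are $(a+b)^2(c+d)^2$ and $(a-b)^2(c-d)^2$, so every element of $I$ has coefficient vector on these four monomials proportional to $(1,1,1,1)$, while $ac(ad+bc)$ has $(1,1,0,0)$; hence $ac(ad+bc)\neq 0$ in $\F_3[a,b,c,d]/I$. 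One correction, though: your closing claim that the extra $\theta$-coefficients are nonzero for general $p$ fails at $p=2$, where $(a+\theta b)(c+\theta d)=ac+\theta(ad+bc)+\theta^2bd$ and $ac$, $bd$, $ad+bc$ all lie in $I$, so the ideal is torus-invariant (indeed $\mathrm{GL}_2(\overbar{\F}_2)$-invariant). Like the paper, which restricts to $p>2$ for exactly this reason, your argument proves the theorem only for $p>2$, and you should state that restriction.
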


\begin{proof}
Let $\mathcal{G}/\mathcal{S}$ be as in Section 3. Assume there is a full level structure on $OT\times OT$ satisfying (1)-(4). Then the full level structure on $\mathcal{G}^2/\mathcal{S}$ must be the one we defined. In fact over the generic fiber of $\mathcal{S}$, the full level structure is given by the condition (2). Therefore the only way to satisfy condition (1) is defining the full level structure over $S$ as the Zariski closure of the corresponding scheme over the generic fiber. Note that any group scheme of rank $p$ over a local ring can be obtained from $\mathcal{G}/\mathcal{S}$ by base change. Because of condition (3), the full level structure on $G\times G$ over a local base must be the one we defined above. However, this only possible structure is not preserved under all group scheme automorphisms. Here is one example communicated to the author by Wake:

Consider the full level structure on $\alpha_p\times \alpha_p$ over $\overbar{\F}_p$ with $p>2$. By our definition and Theorem \ref{thm2}, we have
$$
\Hom^\ast_{\overbar{\F}_p}((\Z/p\Z)^2, \alpha_p^2)\cong \spec \overbar{\F}_p[a,b,c,d] \Bigg/ \left(  \substack{a^p,b^p,c^p,d^p \\  \left\{\left(ma+nb\right)^{p-1}\left(mc+nd\right)^{p-1},\right.\\ \left.\left(ma+nc\right)^{p-1}\left(mb+nd\right)^{p-1}\right\}} \right).
$$
 Note that $\text{Aut}_{\overbar{\F}_p}(\alpha_p^2)=\text{GL}_2(\overbar{\F}_p)$, with the action given by multiplying 
 $
 \left( \begin{array}{cc} a&b\\c&d\\ \end{array}\right)
 $
  by elements of $\text{GL}_2(\overbar{\F}_p)$ from the right. Since $(m,n)\in \F_p^2\setminus\{(0,0)\}$, it is not hard to see that the ideal is not invariant under the action of $\text{GL}_2(\overbar{\F}_p)$. 
\end{proof}

\begin{remark}
The notion of $\Hom^\ast$ is not preserved under a general isomorphism $G\times G\to G^\prime\times G^\prime$ as observed. However, as we see in Proposition \ref{prop1}, we may restrict the isomorphisms to those of the form $$\Phi=\left( \begin{array}{cc} \phi& 0\\ 0& \phi\\ \end{array} \right)$$ where $\phi\colon G\xrightarrow{\sim} G^\prime$ is an isomorphism.
Roughly speaking, we fail to have a good full level structure over $OT\times OT$, but the one we give behaves well when regarded as a full level structure over $OT$.
\end{remark}

\begin{remark}
Although as shown, a good notion of full level structure on the stack of all finite group schemes does not exist, one might still hope to define a full level structure on truncated $p$-divisible groups. However, some new idea is needed.
\end{remark}

\section{Full level structure on $G^3$}

Another natural question we may ask is whether we can have some similar results for group schemes of the form $G^n$, where $G$ is an Oort--Tate group scheme. We record some partial results here. However, a full answer to this question requires some new idea.

Let us take $G=\mu_p$ over $\spec \Z$. One intermediate step towards defining a full level structure on $G^3$ is defining a ``partial level structure" as a subscheme of $\Hom_{\Z}((\Z/p\Z)^2, (\mu_p)^3)$. We will still require that the resulting scheme is flat over the base and when inverting $p$ we want $\Hom_{\Z}^\ast((\Z/p\Z)^2, (\mu_p)^3)\cong \underline{\mathrm{Mat}^\ast_{2\times 3}(\F_p)}$, where $\mathrm{Mat}^\ast_{2\times 3}$ denote the set of all $2\times 3$ matrices of rank 2. It turns out that this can be done using our result in this paper. Let $h$ be the universal homomorphism. Then $\Hom_{\Z}^\ast((\Z/p\Z)^2, (\mu_p)^3)$ is cut out by the following conditions:

\begin{enumerate}[(i)]
\item All nonzero linear combinations of rows and columns are primitive.
\item After applying any left $\mathrm{GL}_2(\F_p)$-action and right $\mathrm{GL}_3(\F_p)$-action to $h$, one of the three $2\times 2$ blocks of the resulting homomorphism lies in the full level structure $\Hom_{\Z}^\ast((\Z/p\Z)^2, (\mu_p)^2)$.
\end{enumerate}

Let us make (ii) clear here. Let 
\[
h=\left(\begin{array}{ccc} a_{11} & a_{12} & a_{13}\\ a_{21} & a_{22} & a_{23} \end{array}\right)
\]
 be the universal homomorphism. Let $I_1$, resp.~$I_2$, $I_3$, be the ideal defined by requiring that 
 \[
 \left(\begin{array}{cc} a_{11} & a_{12} \\ a_{21} & a_{22} \end{array}\right),\quad
 \hbox{\rm resp.}\left(\begin{array}{cc} a_{11} & a_{13}\\ a_{21} & a_{23} \end{array}\right),  \ \left(\begin{array}{cc} a_{12} & a_{13}\\ a_{22} & a_{23} \end{array}\right),
  \]
   lies in the full level structure subscheme
 $\Hom^\ast((\Z/p\Z)^2, (\mu_p)^2)$. Then the ideal defining ``one of the three $2\times 2$ blocks lies the full level structure" is the ideal $I_1I_2\cap I_1I_3\cap I_2I_3$. The closed subscheme $\Hom_{\Z}^\ast((\Z/p\Z)^2, (\mu_p)^3)$ cut out by these conditions is flat of rank $|\mathrm{Mat}^\ast_{2\times 3}(\F_p)|$ over the base. This result of ``partial level structure" $\Hom_{\Z}^\ast((\Z/p\Z)^2, (\mu_p)^3)$ can be extended to $\Hom^\ast((\Z/p\Z)^2, G^3)$.

One might hope to define $\Hom_{\Z}^\ast((\Z/p\Z)^3, (\mu_p)^3)$ using the ``partial level structure" above, by requiring that after applying the left and right $\mathrm{GL}_3(\F_p)$-action and possibly Cartier dual to the universal homomorphism, the resulting homomorphism is such that any $2\times 3$ block is giving a ``partial level structure". It turns out that this condition is very close to what we want, but still not enough. Here are some numerical results. Consider $\mu_p$ over $\F_p$. For $p=2$, the above condition will give a closed subscheme of rank 169 over $\F_p$, while $|\mathrm{GL}_3(\F_2)|=168$. For $p=3$, the obtained subscheme has rank 11473 over $\F_p$, while $|\mathrm{GL}_3(\F_3)|=11232$ (comparing with $3^9=19683$). So, some further conditions need to be discovered.



\bibliographystyle{amsplain}

\end{document}